\theoremstyle{plain}
\newtheorem{theorem}{Theorem}[section]
\newtheorem{lemma}[theorem]{Lemma}
\numberwithin{equation}{section}\theoremstyle{definition}
\theoremstyle{remark}
\numberwithin{equation}{section}
\newcommand{\R}{\Bbb R}
\title[On the non-existence of \protect{$srg(76,21,2,7)$}]
{On the non-existence of \protect{$\mathbf{srg(76,21,2,7)}$}}
\author[M.R.~Alfuraidan, I.O.~Sarumi, S.~Shpectorov]{Monther R. Alfuraidan$^{1}$, Ibrahim O. Sarumi$^{2}$, Sergey Shpectorov$^{3}$}
\begin{document}
\maketitle

\vspace*{-0.5cm}


\begin{center}
{\footnotesize  $^{(1,2)}$Department of Mathematics \& Statistics,
King Fahd University of Petroleum and Minerals\\ Dhahran 31261, Saudi Arabia\\
monther@kfupm.edu.sa, g201304910@kfupm.edu.sa,    \\$^{(3)}$School of Mathematics,
University of Birmingham, Birmingham, United Kingdom \\ s.shpectorov@bham.ac.uk}
\end{center}

\hrulefill

{\footnotesize \noindent {\bf Abstract.}
We present a new non-existence proof for the strongly regular graph $G$ with 
parameters $(76,21,2,7)$, using the unit vector representation of the graph.

\vskip 0.5cm
\noindent {\bf Keywords}: Strongly regular graph, distance regular graph, unit vector representation.

\noindent {\bf AMS Subject Classification}: Primary: 05E30, Secondary: 	05C30 }

\hrulefill
\section{Introduction}

A graph $G$ is said to be strongly regular with parameters $(v,k,\lambda,\mu)$ if 
the following condition holds: $G$ has $v$ vertices (i.e., $|V(G)|=v$) and, for 
$u,w\in V(G)$, the number of common neighbours of $u$ and $w$ in $G$ is $k$ if 
$u=w$ (so $G$ is regular of valency $k$), $\lambda$ if $u$ and $w$ are adjacent, 
and $\mu$ if $u$ and $w$ are non-adjacent. Strongly regular graphs are among the 
central objects in graph theory and its applications. We write 
$srg(v,k,\lambda,\mu)$ for any strongly regular graph with parameters 
$(v,k,\lambda,\mu)$.
 
Haemers \cite{WH} proved non-existence of $srg(76,21,2,7)$. His proof is very 
efficient (perhaps even a bit terse), and it relies on clever edge counting 
to establish that such $G$ must locally be a union of $3$-cliques. This means 
that $G$ is the collinearity graph of a point-line geometry $pg(3,6,1)$ (a 
generalized quadrangle of order $(3,6)$). At this point Haemers quotes the 
non-existence result for $pg(3,6,1)$ by Dixmier and Zara \cite{SF}.

In this note, we give an alternative proof of Haemers' theorem based on the 
well-known fact that every distance regular graph (and in particular, every 
strongly regular graph) admits a Euclidean realization as a set of unit vectors 
in an eigenspace of the adjacency matrix of $G$. In this realization, the value 
of the inner product of two vectors (the cosine of the angle between them) is 
fully determined by the mutual distance of the corresponding vertices. This is 
encoded in the so-called cosine sequence. Note that the eigenvalues of the 
adjacency matrix, dimension of each eigenspace, and the cosine sequence can be 
easily deduced from the parameters of $G$ via the readily available formulas 
(for example, see \cite{CG}).

There are many open cases of strongly regular graphs even for relatively small 
values of $v$ (see the table of feasible parameters up to $v=100$ in \cite{BH}). 
Of course, the aim of our project is to contribute to one of the open cases. 
In this sense, the proof in this note is just a sample of things to come. 
However, we think that even this taster proof demonstrates efficiency of the 
method and it exhibits interesting features, such as the relation to roots 
systems, which arise in our proof not once, but twice.

Just like Haemers, we aim to show that $G$ is locally a union of cliques. However, 
once we achive this, we do not stop, but rather use our unit vector setup to 
achieve an outright contradiction. In this sense, we also provide an alternative 
proof of the result of Dixmier and Zara. 

\section{Starting point}

Suppose $G$ is $srg(76,21,2,7)$. Then the adjacency matrix of $G$ has eigenvalues 
$21$, $2$, and $-7$ with multiplicities $1$, $56$, and $19$, respectively. We focus 
on the $19$-dimensional eigenspace corresponding to the eigenvalue $-7$. The cosine 
sequence for this eigenspace is $(1,-\frac{1}{3},\frac{1}{9})$. This means
that our graph $G$ can be realized as a set of $76$ unit vectors $x_v$, $v\in V(G)$,
in the Euclidean space $\R^{19}$ such that $(x_u,x_v)=-\frac{1}{3}$ if the distinct
vertices $u$ and $v$ are adjacent, and $(x_u,x_v)=\frac{1}{9}$, if they are not.

From now on we identify vertices of $G$ with the corresponding unit vectors.
Hence we simply write $u$ and $v$ in place of $x_u$ and $x_v$.

\section{Neighbourhood}

Fix an arbitrary $u\in V(G)$. The subgraph induced on the $21$ vertices in $G_1(u)$
is a union of cycles $C_1,\ldots,C_r$, since its degree $\lambda$ is $2$. Let us
slightly alter vectors $v\in G_1(u)$ to make them perpendicular to $u$. Namely, we
set $\hat v:=\frac{3}{2}v+\frac{1}{2}u$ for each $v\in G_1(u)$. Clearly, $(u,\hat v)=
\frac{3}{2}(u,v)+\frac{1}{2}(u,u)=\frac{3}{2}(-\frac{1}{3})+\frac{1}{2}1=0$, as
desired. Also, for $v,w\in G_1(u)$, we have $(\hat v,\hat w)=\frac{9}{4}(v,w)+
\frac{3}{4}(u,w)+\frac{3}{4}(v,u)+\frac{1}{4}(u,u)=\frac{9}{4}(v,w)-\frac{1}{4}$.
Therefore,
$$
(\hat v,\hat w)=\left\{
  \begin{array}{r l}
    2,  & \mbox{if $v=w$,}\\
    -1, & \mbox{if $v$ and $w$ are adjacent,}\\
    0,  & \mbox{if they are not adjacent.}
  \end{array} \right.
$$

Let $V_i:=\langle\hat v\mid v\in V(C_i)\rangle$ be the subspace of $\R^{19}$ spanned
by the vectors corresponding to the vertices of the $i$th cycle $C_i$ in $G_1(u)$. It
follows from the above inner product values that $u\perp V_i$ for all $i$ and that
$V_i\perp V_j$ for all $i\neq j$.

\begin{lemma} \label{dimension}
Let $V(C_i)=\{v_1,v_2,\ldots,v_t\}$. Then we have:
\begin{enumerate}
\item[(i)] $\hat v_1+\hat v_2+\cdots+\hat v_t=0$; and
\item[(ii)] $\dim V_i=t-1$.
\end{enumerate}
\end{lemma}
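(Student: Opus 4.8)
The plan is to read off both statements from the Gram matrix of the vectors $\hat v_1,\dots,\hat v_t$. Order the vertices of $C_i$ cyclically, so that $v_j$ is adjacent to $v_{j\pm 1}$ with indices taken modulo $t$. By the inner-product values recorded before the lemma, the $t\times t$ Gram matrix $M$, with entries $M_{jk}=(\hat v_j,\hat v_k)$, has $2$ on the diagonal, $-1$ in the two positions per row coming from the cycle edges, and $0$ elsewhere. In other words $M$ is precisely the combinatorial Laplacian of the $t$-cycle (equivalently $M=2I-A(C_i)$, a circulant matrix). Essentially all the work reduces to this single structural observation.

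For part (i), I would note that every row of $M$ sums to $2-1-1=0$, i.e. $M\mathbf 1=0$. Since $M$ is a Gram matrix,
\[
\bigl\|\hat v_1+\cdots+\hat v_t\bigr\|^2 \;=\; \mathbf 1^{\top} M\,\mathbf 1 \;=\; 0,
\]
and a vector of zero norm is zero. This yields $\hat v_1+\cdots+\hat v_t=0$ at once.

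For part (ii), the dimension of $V_i$ equals the rank of $M$, so it suffices to compute $\dim\ker M$. Part (i) already exhibits $\mathbf 1\in\ker M$, whence $\dim V_i\le t-1$; the remaining task is to show the kernel is exactly one-dimensional. Suppose $Mc=0$. Reading off the $j$th coordinate gives $2c_j=c_{j-1}+c_{j+1}$ for every $j$ (mod $t$), so the successive differences $c_{j+1}-c_j$ are all equal; since these differences sum to zero around the closed cycle, the common difference must vanish and $c$ is constant. Hence $\ker M=\langle\mathbf 1\rangle$ and $\operatorname{rank}M=t-1$, giving $\dim V_i=t-1$.

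The only point requiring genuine care is the kernel computation in (ii): recognising that $Mc=0$ forces the coordinates into an arithmetic progression around the cycle, and that closing the cycle up then forces them to be constant. Everything else collapses to the observation that $M$ is the Laplacian of a cycle. I would also remark that, because $G_1(u)$ is $2$-regular, every component $C_i$ is a genuine cycle of length $t\ge 3$, so $M$ is always a bona fide cycle Laplacian and no degenerate small cases arise.
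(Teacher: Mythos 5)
Your proof is correct. For part (i) you and the paper do essentially the same thing: the paper computes $(\hat v,\hat v_j)=2-1-1=0$ for each $j$ and concludes $(\hat v,\hat v)=\sum_j(\hat v,\hat v_j)=0$, which is exactly your row-sum observation $\mathbf 1^{\top}M\mathbf 1=0$ in matrix form. For part (ii), however, you take a genuinely different route. The paper deletes $\hat v_t$, writes down the Gram matrix of the remaining $t-1$ vectors (the tridiagonal Cartan matrix of type $A_{t-1}$), and shows by the recursion $d_r=2d_{r-1}-d_{r-2}$ that its determinant is $r+1\neq 0$, so those $t-1$ vectors are independent; the authors then remark that this is the well-known root-system fact that a basis of $A_{t-1}$ extends to the affine system $\tilde A_{t-1}$. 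You instead keep the full $t\times t$ cycle Laplacian and compute its kernel directly: $Mc=0$ forces the successive differences $c_{j+1}-c_j$ to be constant, and closing up the cycle forces that constant to vanish, so $\ker M=\langle\mathbf 1\rangle$ and $\operatorname{rank}M=t-1$. Both arguments are complete; yours is slightly more conceptual (it is the standard fact that the Laplacian of a connected graph has one-dimensional kernel) and gives the upper and lower bounds on $\dim V_i$ in one stroke, while the paper's determinant computation is what makes the connection to root systems explicit, a theme the authors deliberately highlight since it recurs later in the paper. Your closing remark that $t\ge 3$ rules out degenerate cases is a sensible precaution that the paper leaves implicit.
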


\begin{proof} (i) Let $\hat v=\hat v_1+\hat v_2+\cdots+\hat v_t$. Then, for each $j$,
we have that $(\hat v,\hat v_j)=0$, since $\hat v_j$ itself contributes $2$ to the sum,
and its two neighbours contribute $-1$ each, while all the other vertices of $C_i$
contribute naught. Therefore, $(\hat v,\hat v)=\sum_{j=1}^t(\hat v,\hat v_j)=0$,
proving that $\hat v=0$.

\medskip\noindent
(ii) Assuming that the vertices $v_1,v_2,\ldots,v_t$ appear in this order on the cycle
$C_i$, let $A_{t-1}$ be the Gram matrix of the vectors $\hat v_1,\hat v_2,\ldots,\hat v_{t-1}$.
Then
$$
A_{t-1} = \left(
\begin{array}{rrrrrr}
2  & -1  & 0  & 0  & .  & 0  \\
-1 &  2  & -1 & 0  & .  & 0  \\
0  & -1  & 2  & -1 & .  & 0  \\
.  & .   & .  & .  & .  & .  \\
0  & .   & .  & -1 & 2  & -1 \\
0  & .   & .  & .  & -1 & 2
\end{array}
\right).
$$
Let $d_{t-1}$ be the determinant of $A_{t-1}$. Viewing $r=t-1$ as variable, we
obtain the recursive relation $d_r=2d_{r-1}-d_{r-2}$ by expanding the determinant
along the bottom row. Taking into account that $d_1=2$ and $d_2=3$, we easily
deduce that $d_r=r+1\neq 0$, and so $\hat v_1,\hat v_2,\ldots,\hat v_{t-1}$ are
linearly independent.
\end{proof}

We included this proof for completeness; however, we need to mention that these
facts are well known. Indeed, the matrix above is the Gram matrix of a basis from
the root system of type $A_{t-1}$, and if we add the missing vector $\hat v_t$
then this gives the basis of the affine root system $\tilde A_{t-1}$.

We now focus on a vertex $w$ from $G_2(u)$ and study the $\mu=7$ neighbours of
$w$ in $G_1(u)$. Let $s_i$ be the number of such neighbours on the cycle $C_i$.

\begin{lemma} \label{three}
The length $t_i$ of $C_i$ is a multiple of $3$; namely, $t_i=3s_i$.
\end{lemma}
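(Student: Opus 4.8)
The plan is to exploit the vanishing relation $\hat v_1+\hat v_2+\cdots+\hat v_t=0$ from Lemma~\ref{dimension}(i) by pairing it against the vector $w$. The key observation is that the inner product $(\hat v,w)$ takes only two possible values as $v$ ranges over $V(C_i)$, depending solely on whether $v$ is one of the $\mu=7$ neighbours of $w$ or not; since these values must sum to zero over the whole cycle, counting the two types of vertices will force the relation $t_i=3s_i$.

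First I would compute $(\hat v,w)$ for a single $v\in V(C_i)$. Writing $\hat v=\frac{3}{2}v+\frac{1}{2}u$ and recalling that $(u,w)=\frac{1}{9}$, since $w\in G_2(u)$ is non-adjacent to $u$, we obtain $(\hat v,w)=\frac{3}{2}(v,w)+\frac{1}{18}$. By the cosine sequence, $(v,w)=-\frac{1}{3}$ precisely when $v$ is adjacent to $w$, and $(v,w)=\frac{1}{9}$ otherwise, and these are the only possibilities. Substituting, I would get $(\hat v,w)=-\frac{4}{9}$ when $v\sim w$ and $(\hat v,w)=\frac{2}{9}$ when $v\not\sim w$.

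Next I would sum these contributions over the cycle. By definition exactly $s_i$ of the $t_i$ vertices of $C_i$ are adjacent to $w$, while the remaining $t_i-s_i$ are not, so using Lemma~\ref{dimension}(i) for the first equality,
$$
0=\left(\sum_{j=1}^{t_i}\hat v_j,\,w\right)=s_i\left(-\frac{4}{9}\right)+(t_i-s_i)\frac{2}{9}=\frac{2}{9}\bigl(t_i-3s_i\bigr).
$$
Hence $t_i=3s_i$, which is exactly the assertion that $t_i$ is a multiple of $3$. There is no genuine obstacle here: the only point requiring care is the verification that $(\hat v,w)$ really takes just the two stated values, which is immediate from the cosine sequence, after which the whole argument collapses to the single displayed computation once the vanishing relation is applied.
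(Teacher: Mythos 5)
Your proof is correct and is essentially identical to the paper's: both evaluate $(\hat v_j,w)$ as $-\frac{4}{9}$ or $\frac{2}{9}$ according to adjacency and then pair the relation $\sum_j\hat v_j=0$ from Lemma~\ref{dimension}(i) against $w$ to get $0=\frac{2}{9}(t_i-3s_i)$. No differences worth noting.
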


\begin{proof}
Let again $V(C_i)=\{v_1,v_2,\ldots,v_t\}$, where $t=t_i$, and $\hat v=
\hat v_1+\hat v_2+\cdots+\hat v_t$.

Note that $(\hat v_j,w)=(\frac{3}{2}v_j+\frac{1}{2}u,w)=\frac{3}{2}(v_j,w)+\frac{1}{2}(u,w)$.
If $w$ is adjacent to $v_j$, this results in $-\frac{1}{2}+\frac{1}{18}=
-\frac{4}{9}$, and otherwise, the result is $\frac{1}{6}+\frac{1}{18}=
\frac{2}{9}$. Now consider the equality
$$0=(0,w)=(\hat v,w)=(\hat v_1,w)+(\hat v_2,v)+\cdots+(\hat v_t,x).$$
Since $w$ is adjacent to $s=s_i$ vertices and non-adjacent to $t-s$
vertices, we obtain from here that
$$0=-\frac{4}{9}s+\frac{2}{9}(t-s),$$
which gives $t=3s$, as claimed.
\end{proof}

\section{Second layer}

We alter the vertices in $G_2(u)$ in a similar way to make them perpendicular
to $u$. For $w\in G_2(u)$, we set $\hat w:=\frac{9}{4}w-\frac{1}{4}u$.
Then $(\hat w,u)=\frac{9}{4}\frac{1}{9}-\frac{1}{4}1=0$, as claimed. Similarly,
we compute, for $v,w\in G_2(u)$,
$$
(\hat v,\hat w)=
\left\{
  \begin{array}{r l}
    5,  & \mbox{if $v=w$,}\\
   -\frac{7}{4}, & \mbox{if $v$ adjacent to $w$,}\\
    \frac{1}{2}, & \mbox{if $v$  is not adjacent to $w$.}
  \end{array}\right.
$$
Finally, we also compute, and also in a very similar way, the inner products
$(\hat v,\hat w)$ for $v\in G_1(u)$ and $w\in G_2(u)$. These are:
$$
(\hat v,\hat w)=
\left\{
  \begin{array}{r l}
   -1, & \mbox{if  $v$ is adjacent to $w$,}\\
   \frac{1}{2}, & \mbox{if $v$ is not adjacent to $w$.}
  \end{array}
\right.
$$

Recall that every vertex $w\in G_2(u)$ has seven neighbours in $G_1(u)$. Let
us first describe the subgraph $M=M_w$ induced on these seven vertices.

\begin{lemma}
Each connected component of $M$ is of size $1$ or $2$.
\end{lemma}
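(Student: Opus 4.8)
The plan is to reduce the statement to excluding three consecutive neighbours of $w$ on a single cycle, and then to exploit the pairwise orthogonal subspaces $V_1,\dots,V_r$ of $u^\perp$ together with the linear relations forced by the Gram matrix.

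First I would observe that the edges of $M$ are precisely the cycle edges joining two neighbours of $w$, so every connected component of $M$ lies inside a single cycle $C_i$ and is an induced connected subgraph of it, i.e. either the whole cycle or an arc (path). The whole cycle is impossible: by Lemma \ref{three} exactly $t_i/3$ of the $t_i$ vertices of $C_i$ are neighbours of $w$, which is never all of them. Hence every component is an arc, and a component of size $\geq 3$ would contain three consecutive vertices $v_1,v_2,v_3$ of $C_i$ (with $v_1\not\sim v_3$), all adjacent to $w$. It therefore suffices to show that this configuration cannot occur.

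Next I would write down the Gram matrix of the four vectors $\hat w,\hat v_1,\hat v_2,\hat v_3$, using $(\hat w,\hat w)=5$, $(\hat v_j,\hat v_j)=2$, $(\hat w,\hat v_j)=-1$, $(\hat v_1,\hat v_2)=(\hat v_2,\hat v_3)=-1$ and $(\hat v_1,\hat v_3)=0$. A direct computation shows this matrix is \emph{singular}, with kernel spanned by $(2,3,4,3)$; equivalently
$$2\hat w+3\hat v_1+4\hat v_2+3\hat v_3=0.$$
One checks this relation at once by verifying that the left-hand side has zero inner product with each of the four vectors and hence is the zero vector. Consequently $\hat w$ lies in the span of $\hat v_1,\hat v_2,\hat v_3$, so $\hat w\in V_i$, and therefore $\hat w\perp V_j$ for every $j\neq i$.

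Finally I would produce a neighbour of $w$ outside $C_i$ to reach a contradiction. If all seven neighbours of $w$ lay on $C_i$, then $s_i=7$, so $t_i=21$ by Lemma \ref{three}; this makes $C_i$ the whole neighbourhood $G_1(u)$ and forces $\dim V_i=20$ by Lemma \ref{dimension}(ii), which is impossible since $V_i\subseteq u^\perp$ has dimension at most $18$. Hence some neighbour $v$ of $w$ lies on a different cycle $C_j$; then $\hat v\in V_j\perp\hat w$ gives $(\hat w,\hat v)=0$, contradicting $(\hat w,\hat v)=-1$. The main obstacle I anticipate is precisely that the four-vector Gram matrix turns out to be only positive semidefinite rather than indefinite, so no contradiction is available locally; the crux is to read the vanishing determinant as the linear relation above, pinning $\hat w$ inside $V_i$, and only then to invoke the global dimension count to finish.
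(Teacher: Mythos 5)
Your proof is correct, but it takes a genuinely different route from the paper's. The paper disposes of this lemma in one line by changing the base vertex: if $xyz$ is a $2$-path in $M$, then $u,x,w,z$ all lie in $G_1(y)$ and each has exactly two neighbours there, so $uxwz$ is a $4$-cycle component of $G_1(y)$, contradicting Lemma \ref{three} applied with $y$ in place of $u$ (every cycle there has length divisible by $3$). You instead stay anchored at $u$ and argue linearly: the Gram matrix of $\hat w,\hat v_1,\hat v_2,\hat v_3$ is singular with kernel vector $(2,3,4,3)$ (which I have checked; the matrix is indeed positive semidefinite of rank $3$), so $2\hat w+3\hat v_1+4\hat v_2+3\hat v_3=0$ forces $\hat w\in V_i$, and then the mutual orthogonality of the $V_j$ contradicts $(\hat w,\hat v)=-1$ for a neighbour $v$ of $w$ on another cycle, the degenerate case $s_i=7$, $t_i=21$ being excluded by Lemma \ref{dimension}(ii) and $\dim u^\perp=18$. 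All the ingredients you use (the inner-product tables, Lemmas \ref{dimension} and \ref{three}) are available at this point in the paper, and your reduction to three consecutive neighbours on one cycle is sound, so the argument is complete. The trade-off: the paper's argument is shorter and purely combinatorial, exploiting the freedom to re-apply Lemma \ref{three} at a different vertex; yours is longer but anticipates exactly the projection-and-Gram-matrix technique the paper deploys in Lemmas \ref{lowest}--\ref{projections}, and your linear relation is in effect a degenerate instance of the identity $\hat w\in V$ established there, so it fits naturally into the overall method.
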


\begin{proof}
If $xyz$ is a $2$-path in $M$, with $x\neq z$, then $uxwz$ is a $4$-cycle
in $G_1(y)$, a contradiction with Lemma \ref{three} with $y$ in place of $u$.
\end{proof}

If $x$ is a size $1$ component of $M$ then the projection $p_x$ of $\hat w$
to the $1$-space spanned by $\hat x$ coincides with
$\frac{(\hat w,\hat x)}{(\hat x,\hat x)}\hat x=-\frac{1}{2}\hat x$. Hence
$(p_x,p_x)=\frac{1}{4}(\hat x,\hat x)=\frac{1}{2}$. If $xy$ is a size $2$
component of $M$ then by symmetry the projection $p_{xy}$ of $\hat w$ to
the subspace spanned by $\hat x$ and $\hat y$ is a multiple of
$d=\hat x+\hat y$. Note that $(d,d)=(\hat x+\hat y,\hat x+\hat y)=
2-1-1+2=2$ and $(\hat w,d)=(\hat w,\hat x+\hat y)=-1-1=-2$. Hence
$p_{xy}=\frac{(\hat w,d)}{(d,d)}d=-d$, and so $(p_{xy},p_{xy})=(-d,-d)=
(d,d)=2$.

Projections corresponding to different components of $M$ are orthogonal.
Hence, if we have $k$ components of size $2$ and, correspondingly, $7-2k$
components of size $1$ then the length of the projection of $\hat w$ to
the subspace of $V$ spanned by all $\hat x$, $x\in M$, is
$2k+\frac{1}{2}(7-2k)=2k-k+\frac{7}{2}=k+\frac{7}{2}$. Since this must be at
most $(\hat w,\hat w)=5$, we conclude that $k=0$ or $1$.

Consider one of the cycles $C=C_i$ of length $t=t_i$ and $N=C\cap M$
consisting of $s=s_i$ vertices. We know that $t=3s$. Let $U=V_i$, the subspace
spanned by the vectors $\hat v$, $v\in C$, and let $p$ be the projection of
$\hat w$ onto $U$.

\begin{lemma} \label{lowest}
We have $(p,p)\geq\frac{s}{2}$. Furthermore, $(p,p)=\frac{s}{2}$ if and only
the vertices of $N$ are evenly spaced in $C$, containing every third vertex
along $C$ and $p=-\frac{1}{2}\sum_{v\in N}\hat v$.
\end{lemma}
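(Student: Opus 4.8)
The plan is to bound $(p,p)$ from below using a single well-chosen test vector in $U$ together with the Cauchy--Schwarz inequality, and then to read off the equality case directly from the projection equations. The starting observation is that, since $p$ is the orthogonal projection of $\hat w$ onto $U$, for every $x\in U$ we have $(\hat w,x)=(p,x)$, so by Cauchy--Schwarz
$$
(p,p)\geq\frac{(\hat w,x)^2}{(x,x)}
$$
for any $x\in U$ with $(x,x)>0$, with equality precisely when $x$ is a scalar multiple of $p$. This reduces the lower bound to exhibiting a good test vector.

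The natural candidate is $x:=\sum_{v\in N}\hat v$, which lies in $U$ and is nonzero, since the only linear relation among the $\hat v$ is $\sum_{v\in C}\hat v=0$ and $N$ is a proper nonempty subset of $V(C)$. First I would compute $(\hat w,x)=-s$, using that every $v\in N$ is adjacent to $w$ and so contributes $(\hat w,\hat v)=-1$. Next, writing $e$ for the number of edges of $C$ lying inside $N$, I would get $(x,x)=2s-2e$ from the inner product table for the $\hat v$. Because $N$ is a proper subset of the cycle, the subgraph it induces is a disjoint union of paths, hence a forest, so $e\leq s-1$ and in particular $s-e\geq 1$. Substituting gives
$$
(p,p)\geq\frac{s^2}{2(s-e)}\geq\frac{s}{2},
$$
the last step holding since $e\geq 0$. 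This settles the inequality.

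For the equality case I would argue that $(p,p)=\tfrac s2$ forces both displayed inequalities to be equalities. The second one gives $e=0$, so no two vertices of $N$ are adjacent on $C$. The first one is equality in Cauchy--Schwarz, so $p$ is a scalar multiple of $x=\sum_{v\in N}\hat v$; comparing $(\hat w,p)=(p,p)=\tfrac s2$ with $(\hat w,x)=-s$ pins down the scalar and yields $p=-\tfrac12\sum_{v\in N}\hat v$. Finally I would feed this value of $p$ back into the defining projection equations $(p,\hat v_j)=(\hat w,\hat v_j)$: for $v_j\notin N$ this reads $\tfrac12 a_j=\tfrac12$, where $a_j$ is the number of neighbours of $v_j$ in $N$, so every non-$N$ vertex has exactly one neighbour in $N$. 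Together with $e=0$ this forces every gap between consecutive $N$-vertices to have length exactly two, i.e. $N$ consists of every third vertex of $C$. The converse is a direct computation: when $N$ is evenly spaced, $-\tfrac12\sum_{v\in N}\hat v$ already satisfies all the projection equations and has squared length $\tfrac s2$.

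I expect the main obstacle to be the equality analysis rather than the inequality itself. The bound drops out almost immediately once the test vector $x=\sum_{v\in N}\hat v$ is chosen and the two inner products are evaluated; the work lies in extracting the exact combinatorial configuration, which requires substituting the forced value of $p$ into the projection equations and then translating the condition ``each non-$N$ vertex has exactly one neighbour in $N$'' into a statement about gap lengths along the cycle. Selecting the right test vector is the key idea that keeps the whole argument short.
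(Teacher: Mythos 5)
Your proof is correct, but the route to the lower bound is genuinely different from the paper's. The paper bounds $(p,p)$ below by $(p',p')$, where $p'$ is the projection of $\hat w$ onto the full span of $\{\hat v\mid v\in N\}$, and then evaluates $(p',p')$ component by component, using its earlier lemma that each connected component of $M$ has size $1$ or $2$ together with the computed contributions $\frac12$ and $2$ of the two component types; equality forces all components to have size $1$ and $p=p'=-\frac12\sum_{v\in N}\hat v$. You instead project onto the single line spanned by $x=\sum_{v\in N}\hat v$, i.e.\ apply Cauchy--Schwarz, obtaining the closed-form bound $(p,p)\geq\frac{s^2}{2(s-e)}\geq\frac{s}{2}$ with $e$ the number of $C$-edges inside $N$. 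This is a different decomposition: it bypasses the component-size lemma entirely (you only need that $N$ induces a forest in the cycle, which is automatic since $N$ is a proper subset of $V(C)$, as $t=3s>s$), and it packages the equality condition as $e=0$ plus proportionality of $p$ and $x$, from which the scalar $-\frac12$ is pinned down by $(\hat w,p)=(p,p)$. From that point on the two arguments coincide: both substitute $p=-\frac12\sum_{v\in N}\hat v$ into $(p,\hat x)=(\hat w,\hat x)=\frac12$ for $x\in C\setminus N$ to conclude that each such vertex has exactly one neighbour in $N$, hence all gaps have length two, and both prove the converse by verifying the projection equations directly. Your version is slightly more self-contained, at the cost of not reusing the component computations that the paper needs anyway for the surrounding arguments; the only point worth making explicit is that $s\geq 1$ (so $x\neq 0$, which also follows from $(x,x)=2(s-e)>0$) is guaranteed by Lemma \ref{three}.
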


\begin{proof}
Let $p'$ be the projection of $\hat w$ to the subspace spanned by $\hat v$ for
$v\in N$. Then certainly $(p,p)\geq (p',p')$ with equality holding only if
$p=p'$. The graph $N$ consists of several components of $M$. The computation
before the lemma shows that a component of size $1$ contributes $\frac{1}{2}$
to $(p',p')$ and a component of size $2$ contributes $2>2\cdot\frac{1}{2}$.
Hence $(p,p)\geq (p',p')\geq s\cdot\frac{1}{2}=\frac{s}{2}$, as claimed.
Furthermore, the equality only holds when every component is of size $1$ and
also $p=p'=\sum_{v\in N}p_v=-\frac{1}{2}\sum_{v\in N}\hat v$. In particular,
$N$ is an independent subset of $C$. Taking now a vertex $x\in C\setminus N$,
we see that $\frac{1}{2}=(\hat w,\hat x)=(p,\hat x)=(p',\hat x)=
-\frac{1}{2}(\sum_{v\in N}\hat v,\hat x)=-\frac{1}{2}(-m)$, where $m$ is the
numbers of vertices of $N$ adjacent to $x$. This shows that $m=1$ for every
$x\in C\setminus N$, and hence $N$ is evenly spaced in $C$.

Conversely, if $N$ is evenly spaced then $(p',\hat x)=\frac{1}{2}=
(\hat w,\hat x)$ for every $x\in C\setminus N$ and $(p',\hat x)=
-\frac{1}{2}(\hat x,\hat x)=-\frac{1}{2}\cdot 2=-1=(\hat w,\hat x)$ for
every $x\in N$. Hence $p=p'$.
\end{proof}

We now assume that $k=1$ and focus on the cycle $C=C_i$ containing the
only component of $M$ of size $2$. Without loss of generality, we may assume
that $C=v_1v_2\cdots v_t$ and $v_1v_2$ is the size $2$ component of
$N=C\cap M$.

Clearly, $t\geq 6$ and it follows from Lemma \ref{dimension} that $t\leq 15$,
since $\dim V\leq 18$. Also, it follows from Lemma \ref{lowest} that the
length $(p,p)$ of the projection of $\hat w$ onto the subspace $U$ corresponding
to $C$ is at most $5-\frac{1}{2}(7-s)=\frac{s+3}{2}$.

\begin{lemma} \label{projections}
If $w$ is adjacent to $v_1$ and $v_2$ then $\hat w\in V$, $p=-(\hat v_1+
\hat v_2)+\sum_{m=1}^{s-1}\frac{1}{2}(\hat v_{3m+1}+\hat v_{3m+2})$, and
$(p,p)=\frac{s+3}{2}$. Furthermore, the subgraph $N_j=C_j\cap M$ is evenly
spaced in $C_j$ for each $j\neq i$.
\end{lemma}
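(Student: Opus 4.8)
The plan is to prove the reverse of an inequality we already have. The paragraph preceding the lemma gives $(p,p)\le\frac{s+3}{2}$, obtained from $5=(\hat w,\hat w)\ge\sum_j(p_j,p_j)$, where $p_j$ is the projection of $\hat w$ onto $V_j$, together with the bounds $(p_j,p_j)\ge\frac{s_j}{2}$ of Lemma~\ref{lowest} for the cycles $j\ne i$. So the whole lemma follows once I show $(p,p)\ge\frac{s+3}{2}$: indeed, $(p,p)=\frac{s+3}{2}$ makes the chain $5\ge\sum_j(p_j,p_j)=(p,p)+\sum_{j\ne i}(p_j,p_j)\ge\frac{s+3}{2}+\frac{7-s}{2}=5$ collapse to equalities, so that $\sum_j(p_j,p_j)=(\hat w,\hat w)$ forces $\hat w\in V$, while $(p_j,p_j)=\frac{s_j}{2}$ for each $j\ne i$ forces, by Lemma~\ref{lowest}, that $N_j$ is evenly spaced. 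Everything except the lower bound and the explicit form of $p$ is thus formal.

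First I would turn $(p,p)$ into a combinatorial quantity. As $p$ is the projection of $\hat w$ onto $U=V_i$ and the $\hat v_j$ span $U$, $p$ is characterised by $(p,\hat v_j)=(\hat w,\hat v_j)=b_j$, where $b_j=-1$ for $v_j\in N$ and $b_j=\frac12$ otherwise. Writing $p=\sum_jc_j\hat v_j$ and using the cyclic Gram matrix of the $\hat v_j$ (the Laplacian $L$ of the cycle $C$, i.e.\ the $\tilde A_{t-1}$ matrix), these equations become the discrete Poisson system $2c_j-c_{j-1}-c_{j+1}=b_j$ on $\Z/t\Z$, solvable since $\sum_jb_j=0$, with $(p,p)=\sum_jc_jb_j=b^{\top}L^{+}b$. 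Because $b=\frac12\mathbf 1-\frac32\chi_N$ and $L^{+}\mathbf 1=0$, this collapses to $(p,p)=\frac94\,\chi_N^{\top}L^{+}\chi_N$.

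The key simplification I would then invoke is the explicit cycle Green's function $(L^{+})_{jk}=\frac{t^2-1}{12t}-\frac{d_{jk}(t-d_{jk})}{2t}$, with $d_{jk}$ the cyclic distance. Since $t=3s$ (Lemma~\ref{three}) and $|N|=s$ are fixed, substitution shows that minimising $(p,p)$ is equivalent to maximising $\Sigma(N):=\sum_{\{j,k\}\subseteq N}d_{jk}(t-d_{jk})$ over $s$-subsets $N$ containing the forced adjacent pair $\{v_1,v_2\}$ and otherwise independent. The main obstacle is the extremal claim that the unique maximiser is the standard configuration, with the $s-2$ singletons at $v_6,v_9,\dots,v_{3s-3}$ (gap pattern $0,3,2,\dots,2,3$). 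Since $d\mapsto d(t-d)$ is strictly concave, I would establish this by a local smoothing argument: sliding a singleton so as to balance its two neighbouring gaps strictly increases $\Sigma$ unless the configuration is already standard, and the forced gap $0$ of the pair propagates to the displayed pattern. A direct check that the announced $p=-(\hat v_1+\hat v_2)+\sum_{m=1}^{s-1}\frac12(\hat v_{3m+1}+\hat v_{3m+2})$ satisfies $(p,\hat v_j)=b_j$ then both identifies it as the projection for the standard $N$ and evaluates $(p,p)=\frac{s+3}{2}$.

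Assembling: the maximality of $\Sigma$ gives $(p,p)\ge\frac{s+3}{2}$ for every admissible $N$, with equality only at the standard placement. Combined with $(p,p)\le\frac{s+3}{2}$ this forces $N$ to be standard, yields the stated formula for $p$ and the value $(p,p)=\frac{s+3}{2}$, and, via the collapse in the first paragraph, gives $\hat w\in V$ and the even spacing of every $N_j$, $j\ne i$. The only non-routine ingredient is the extremal property of $\Sigma(N)$; the rest is bookkeeping with the inner products computed earlier.
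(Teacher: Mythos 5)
Your overall architecture coincides with the paper's: both arguments establish the lower bound $(p,p)\ge\frac{s+3}{2}$ for every admissible placement of $N$ and then collapse the chain $5\ge\sum_j(p_j,p_j)=(p,p)+\sum_{j\ne i}(p_j,p_j)\ge\frac{s+3}{2}+\frac{7-s}{2}=5$ into equalities, which yields $\hat w\in V$, the even spacing for $j\ne i$, and the identification of $p$; that formal part of your proposal is fine. The divergence is in how the lower bound is obtained. The paper projects $\hat w$ onto the pairwise orthogonal vectors $d=\hat v_1+\hat v_2$ and $d_j=\sum_{v\in R_j}\hat v$ (one for each maximal run of non-neighbours between consecutive dividers), each of squared length $2$, obtaining $(p,p)\ge 2+\frac18\sum_j r_j^2$, and finishes with Cauchy--Schwarz applied to $\sum_j r_j=2(s-1)$; equality forces all $r_j=2$ and $p=p'$, which is exactly the displayed formula. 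Your route through the cycle Green's function is correct as far as it goes --- $(p,p)=b^\top L^+b=\frac94\chi_N^\top L^+\chi_N$ and the formula $(L^+)_{jk}=\frac{t^2-1}{12t}-\frac{d_{jk}(t-d_{jk})}{2t}$ are both right, so minimising $(p,p)$ is indeed equivalent to maximising $\Sigma(N)$ --- but it trades an easy Cauchy--Schwarz step for a genuinely harder extremal problem.

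That extremal problem is where your proposal has a real gap. The local move you describe is not the right one: when a singleton $x$ is slid one step along the cycle, the change in $\Sigma$ is $\sum_{k\in N\setminus\{x\}}(t-1-2\delta_{kx})$, where $\delta_{kx}$ is the directed gap from $k$ to $x$; this depends on \emph{all} other elements of $N$, not just on the two cyclic neighbours of $x$, since every pair $\{x,k\}$ contributes. Discrete strict concavity in the position of $x$ does hold (the second difference is $-2(s-1)$), but the conditional optimum it singles out is the position where $\sum_k\delta_{kx}$ is nearest to $(s-1)t/2$, not the gap-balancing position. Concretely, for $t=12$ the configuration $N=\{v_1,v_2,v_6,v_{10}\}$ has gap pattern $0,3,3,2$, so every singleton already has its two neighbouring gaps as balanced as parity allows and your smoothing procedure terminates there; yet $\Sigma=169$, strictly less than the value $172$ attained by the standard $\{v_1,v_2,v_6,v_9\}$. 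So ``slide to balance the neighbouring gaps'' neither always increases $\Sigma$ nor has the standard configuration as its only fixed point, and even with the correct local condition you would still need to prove that the standard placement is the unique configuration in which every singleton is conditionally optimal. Since you yourself identify this extremal claim as the only non-routine ingredient, the proof is incomplete as it stands; the paper's orthogonal-decomposition argument proves the same inequality, with the equality analysis built in, in a few lines.
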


\begin{proof}
The set of $t-4=3s-4$ vertices $\{v_4,v_5,\ldots,v_{t-1}\}$ consists of $s-2$
vertices from $N$ (neighbours of $w$) and $(3s-4)-(s-2)=2(s-1)$ other vertices
(non-neighbours). Let us view the $s-2$ neighbours as dividers splitting the
non-neighbours into $s-1$ continuous parts $R_j$ of lengths
$r_1,r_2,\ldots,r_{s-1}$. Let $d=\hat v_1+\hat v_2$ and, for $j=1,\ldots,s-1$,
we set $d_j=\sum_{v\in R_j}\hat v$. We let $U'$ be the subspace of $U$
spanned by $d,d_1,\ldots,d_{s-1}$ and let $p'$ be the projection of $\hat w$
onto $U'$. Then, clearly, $(p,p)\geq(p',p')$ and, since the vectors
$d,d_1,\ldots,d_{s-1}$ are pairwise orthogonal and of length $2$, we have that
$p'=-d+\frac{1}{4}\sum_{j=1}^{s-1}r_j d_j$, which means that $(p',p')=
2+\frac{1}{8}\sum_{j=1}^{s-1}r_j^2$. Hence, to find the minimum of the latter,
we need to minimize $\sum_{j=1}^{s-1}r_j^2$ under the restriction that
$\sum_{j=1}^{s-1}r_j=2(s-1)$. Clearly, the minimum is achieved when all
$r_j$ are equal, that is when all $r_j$ are equal to $\frac{2(s-1)}{s-1}=2$.
The minimum value $(p',p')$ is, therefore, $2+\frac{1}{8}(s-1)2^2=2+\frac{s-1}{2}=
\frac{s+3}{2}$.

Hence $\frac{s+3}{2}\geq(p,p)=(p',p')\geq\frac{s+3}{2}$. Clearly, this means that
$p=p'$ is of length $\frac{s+3}{2}$, and so every part $R_j$ is of size $2$, which
leads to the vectors in the statement of the lemma. Also for every cycle $C_m$ other
than $C$ we must have the minimum length value $\frac{s_m}{2}$ and so the vertices
$C_m\cap M$ must be evenly spaced in $C_m$.
\end{proof}

Let us adopt the following terminology: the vectors $d=\hat v_i+\hat v_{i+1}$ will be
called \emph{pairs}, while the edge $v_iv_{i+1}$ will be called the \emph{base} of
the pair $d$. Using these terms, $p$ in the lemma above is the sum of the unique
\emph{minus-pair} $-(\hat v_1+\hat v_2)$ and $s-1$ \emph{half-pairs}
$\frac{1}{2}(\hat v_{3m+1}+\hat v_{3m+2})$.

\begin{lemma} \label{cliques}
Every cycle $C_i$ in $G_1(u)$ has length $3$.
\end{lemma}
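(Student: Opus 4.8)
The plan is to argue by contradiction: suppose some cycle $C=C_i$ has length $t=t_i=3s\ge 6$. The first step is to show that every edge of $C$ supplies a pair-vertex in $G_2(u)$. If $v_jv_{j+1}$ is an edge of $C$, then $v_j$ and $v_{j+1}$ are adjacent and have exactly $\lambda=2$ common neighbours, one of which is $u$. The second, call it $w$, cannot lie in $G_1(u)$: a vertex of $G_1(u)$ adjacent to both $v_j$ and $v_{j+1}$ would give $v_j$ three neighbours inside $G_1(u)$, impossible since the local graph has valency $2$, the only escape being $v_{j-1}=v_{j+2}$, which forces $t=3$. Hence $w\in G_2(u)$, and $\{v_j,v_{j+1}\}$ is a size-$2$ component of $M_w$, so $k=1$ for $w$ and Lemma~\ref{projections} applies to it.

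Applying Lemma~\ref{projections} to each of the $t$ pair-vertices $w_1,\dots,w_t$, where $w_j:=w(v_jv_{j+1})$, I obtain $\hat w_j\in V$ together with the explicit expression for its projection $p^{(j)}$ onto $U=V_i$, and the fact that on every other cycle $C_l$ the neighbour set of $w_j$ is evenly spaced, so its projection there is $q^{(j)}_l=-\frac12\sum_{v\in N^{(j)}_l}\hat v$ with $(q^{(j)}_l,q^{(j)}_l)=\frac{s_l}{2}$ by Lemma~\ref{lowest}. The $w_j$ are pairwise distinct, since a pair-vertex with $k=1$ has a unique size-$2$ component and hence determines its base edge; so these really are $t$ distinct vertices of $G_2(u)$.

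The contradiction should then come from the rigidity of inner products in the second layer: for distinct $w_j,w_k$ one has $(\hat w_j,\hat w_k)\in\{-\frac74,\frac12\}$, whereas on the other hand $(\hat w_j,\hat w_k)=(p^{(j)},p^{(k)})+\sum_{l\neq i}(q^{(j)}_l,q^{(k)}_l)$. I would compute the first summand exactly from the formula for $p$ in Lemma~\ref{projections} (for consecutive base edges it is positive, equal to $\frac{s}{2(s-1)}$ in the cases I checked, and it decreases as the two base edges move apart), and control the remainder by Cauchy--Schwarz, $\big|\sum_{l\neq i}(q^{(j)}_l,q^{(k)}_l)\big|\le\sum_{l\neq i}\frac{s_l}{2}=\frac{7-s}{2}$. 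This already rules out adjacency of consecutive pair-vertices, and pushing it to a suitably separated pair of base edges should force $(\hat w_j,\hat w_k)$ outside $\{-\frac74,\frac12\}$. When the remaining cycles are all triangles the cross terms are pinned down exactly, each $(q^{(j)}_l,q^{(k)}_l)$ being $\frac12$ or $-\frac14$ according as $w_j,w_k$ meet $C_l$ in the same vertex or not, and the resulting conditions on the numbers $a_{jk}$ of shared triangle-neighbours yield a finite counting system that I expect to be infeasible.

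The step I expect to be the main obstacle is precisely this last one: the cross-cycle contributions are not determined individually, only bounded, so converting the rigid local picture into an outright contradiction requires either choosing base edges whose separation makes the Cauchy--Schwarz slack $\frac{7-s}{2}$ too small to reach an admissible value, or a global count over all $t$ pair-vertices and all remaining cycles. Handling this uniformly across the admissible partitions (the long cycle can have $t\in\{6,9,12,15\}$, with the other cycles of varying lengths) is the delicate part; the all-triangles configurations are the cleanest, so I would settle those first and then reduce the general case to them.
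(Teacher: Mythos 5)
Your first step is sound and in fact coincides with the paper's choice of witness: the second common neighbour $w$ of an edge $v_jv_{j+1}$ of $C$ (besides $u$) must lie in $G_2(u)$ when $t\geq 6$, and Lemma \ref{projections} then pins down $\hat w$. However, the proof is not complete, and the gap is exactly the one you flag yourself: the contradiction is never actually derived. Comparing two pair-vertices $w_j,w_k$ of $C$ is the wrong pair to look at, because (a) you do not know whether they are adjacent, so you must exclude both admissible values $-\frac74$ and $\frac12$, and (b) the cross-cycle terms are only controlled by Cauchy--Schwarz with slack $\frac{7-s}{2}$, which comfortably contains $\frac12$; the ``finite counting system'' over the shared triangle-neighbours is neither set up nor shown infeasible, and handling the partitions where the remaining cycles are not all triangles is left entirely open.

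The paper's proof closes this gap with a different choice of second vertex. Working in the local graph $G_1(v_1)$, Lemma \ref{three} (applied with $v_1$ in place of $u$) shows that the path $v_tuv_2$ lies on a cycle $D$ of length at least $6$ there; taking $w$ to be the second neighbour of $v_2$ on $D$ (your pair-vertex for the edge $v_1v_2$) and $w'$ the next vertex of $D$ after $w$, one gets for free that $w$ and $w'$ are \emph{adjacent} vertices of $G_2(u)$ and that $w'$ is adjacent to $v_1$ but not to $v_2$. Adjacency forces $(\hat w,\hat w')=-\frac74$, while expanding $\hat w$ via Lemma \ref{projections} gives a term-by-term lower bound: the minus-pair contributes exactly $+\frac12$ (because $w'\sim v_1$, $w'\not\sim v_2$), and every one of the remaining six $\hat v$-terms contributes at least $-\frac14$ --- the key structural point being that $w'$ cannot be adjacent to both vertices of a half-pair base, which one sees by applying Lemma \ref{projections} to $w'$ itself. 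This yields $(\hat w,\hat w')\geq\frac12-6\cdot\frac14=-1>-\frac74$, a clean contradiction with no case analysis over the cycle partition. If you want to salvage your approach, the essential missing idea is to manufacture a pair of vertices in $G_2(u)$ that are \emph{known} to be adjacent and whose neighbourhoods in $C$ are asymmetric enough to make the lower bound exceed $-\frac74$; without that, the estimates you have do not close.
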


\begin{proof}
Suppose, by contradiction, that $C=C_i=v_1v_2\ldots v_t$ has length $t\geq 6$.
In $G_1(v_1)$, $u$ is adjacent to $v_t$ and $v_2$, which are not adjacent to
each other. Hence $v_tuv_2$ is part of a cycle $D$ in $G_1(v_1)$ of length at
least $6$. Let $w\neq u$ be the second neighbour of $v_2$ in $D$, and let
$w'\neq v_2$ be the second neighbour of $w$ in $D$. Note that $w$ is adjacent
to $v_1$ and $v_2$, and hence $\hat w$ is as in Lemma \ref{projections}.
In particular, $p=-(\hat v_1+\hat v_2)+\sum_{m=1}^{s-1}\frac{1}{2}(\hat v_{3m+1}+
\hat v_{3m+2})$ is the projection of $\hat w$ to the subspace $U=V_i$.

We obtain a contradiction by computing $(\hat w,\hat w')$. Since $w$ and $w'$
are adjacent vertices in $G_2(u)$ (note that $w$ and $w'$ are not adjacent to
$u$, since $D$ has length at least $6$), the value of the inner product must be
$-\frac{7}{4}$. On the other hand, we can estimate the value as follows.
Recall that $\hat w\in V$ by Lemma \ref{projections} and so $\hat w=\sum_{j=1}^r p_j$,
where $p_j$ is the projection of $\hat w$ to the subspace $V_j$ corresponding
to the cycle $C_j$ in $G_1(u)$. We already know $p=p_i$ and, by Lemmas
\ref{projections} and \ref{lowest}, if $j\neq i$ then $p_j=
-\frac{1}{2}\sum_{v\in N_j}\hat v$, where $N_j=M\cap C_j$ is evenly spaced in $C_j$.
It follows that $(\hat w,\hat w')=\sum_{j=1}^r(p_j,\hat w)$. Clearly, $w'$ is adjacent
to $v_1$ but not to $v_2$. Hence $(-(\hat v_1+\hat v_2),\hat w')=-(-1+\frac{1}{2})=
\frac{1}{2}$. Consider now a half-pair $\frac{1}{2}(\hat v_{3m+1}+\hat v_{3m+2})$.
If $w'$ is adjacent to both $v_{3m+1}$ and $v_{3m+2}$ then $\hat w'$ is described
as in Lemma \ref{projections} with the minus-pair base $-$ $v_{3m+1}v_{3m+2}$. This
means, however, that $v_1v_2$ is the base of a half-pair for $w'$. Hence $w'$
cannot be adjacent to $v_1$, a contradiction. Therefore, $w'$ is adjacent to at most
one of $v_{3m+1}$ and $v_{3m+2}$. If $w'$ is adjacent to one of these then
$(\frac{1}{2}(\hat v_{3m+1}+\hat v_{3m+2}),\hat w')=\frac{1}{2}(-1+\frac{1}{2})=
-\frac{1}{4}$. If $w'$ is adjacent to neither of them then
$(\frac{1}{2}(\hat v_{3m+1}+\hat v_{3m+2}),\hat w')=\frac{1}{2}(\frac{1}{2}+\frac{1}{2})=
\frac{1}{2}$. Hence the smallest possible value of $(p,\hat w')$ is
$\frac{1}{2}+(s-1)(-\frac{1}{4})=\frac{3}{4}-\frac{s}{4}$. In all $C_j\neq C$, $w$ is adjacent
to $7-s$ vertices $v$, and for each such vertex, $\hat v$ appears in $\hat w$ with coefficient
$-\frac{1}{2}$. If $w'$ is adjacent to $v$ then this gives contribution $-\frac{1}{2}(-1)=
\frac{1}{2}$ to the value of $(\hat w,\hat w')$. If $w'$ and $v$ are not adjacent then
the contribution is $-\frac{1}{2}\frac{1}{2}=-\frac{1}{4}$. Hence the smallest possible
contribution from all vectors $\hat v$ appearing in $\hat w$, where $v\not\in C$,
is $(7-s)(-\frac{1}{4})=-\frac{7}{4}+\frac{s}{4}$. Putting all of the above together, we
conclude that $(\hat w,\hat w')\geq (\frac{3}{4}-\frac{s}{4})+(-\frac{7}{4}+\frac{s}{4})=
-1$. This clearly is a contradiction since $(\hat w,\hat w')=-\frac{7}{4}$.
\end{proof}

\section{Contradiction}

Vertices and $4$-cliques of $G$ form a point-line geometry. It follows from Lemma 
\ref{cliques} that every point lies in seven lines and then, using the parameters of 
$G$, it is easy to deduce that this geometry is a generalized quadrangle of order 
$(3,6)$, which cannot exist due to a theorem of Dixmier and Zara \cite{SF}. However,
with the wealth of information that we have collected, we can achieve a quick 
contradiction not using \cite{SF}.

Let $T=S^\perp$, where $S$ is the span of the vectors in $\{u\}\cup G_1(u)$. That
is, $S=\langle u\rangle\oplus V$. Since all cycles in $G_1(u)$ are of length $3$,
Lemma \ref{dimension} shows that $\dim S=1+7\cdot 2=15$, and so $\dim T=4$.

If $w\in G_2(u)$ then the projection of $\hat w$ onto $V$ coincides with $-\frac{1}{2}\sum_{v\in M}\hat v$ and it has length $\frac{7}{2}$. It follows that
the projection of $\hat w$ onto $T$ has length $5-\frac{7}{2}=\frac{3}{2}$. Let
$w^\circ$ denote $\frac{2}{\sqrt{3}}$ times the projection of $\hat w$ onto $T$.
Then $(w^\circ,w^\circ)=2$. We now compute $(w^\circ,(w')^\circ)$ for distinct
$w,w'\in G_2(u)$.

If $w$ and $w'$ are adjacent then $(\hat w,\hat w')=-\frac{7}{4}$. Note that
the edge $ww'$ lies in a unique $4$-clique and so $w$ and $w'$ have a unique
common neighbour in $G_1(u)$. It follows that if $p$ and $p'$ are the projections
of $\hat w$ and $\hat w'$ onto $V$ then $(p,p')=\frac{1}{2}+6(-\frac{1}{4})=-1$.
Hence $(w^\circ,(w')^\circ)=\frac{4}{3}(-\frac{7}{4}+1)=-1$.

If $w$ and $w'$ are non adjacent then $(\hat w,\hat w')=\frac{1}{2}$. Let $\beta$
the the number of common neighbours of $w$ and $w'$ in $G_1(u)$. Then
$(p,p')=\beta\frac{1}{2}+(7-\beta)(-\frac{1}{4})=-\frac{7}{4}+\frac{3\beta}{4}$
and $(w^\circ,(w')^\circ)=\frac{4}{3}(\frac{1}{2}-(-\frac{7}{4}+\frac{3\beta}{4}))=
3-\beta$.

To summarize, if $w,w'\in G_2(u)$ and $\beta=|G_1(u)\cap G_1(w)\cap G_1(w')|$ then
$$
(w^\circ,(w')^\circ)=
\left\{
\begin{array}{l l}
2,  & \mbox{if $w=w'$,}\\
-1, & \mbox{if $w$ and $w'$ are adjacent,}\\
3-\beta, & \mbox{if and $w$ and $w'$ are non-adjacent.}
\end{array}\right.
$$
Clearly, it follows that $1\leq\beta\leq 5$.

Notice that the above values of inner products mean that all vectors $w^\circ$,
$w\in G_2(u)$ are contained in a root system in $T$. Indeed, since all values
are integers, the vectors $w^\circ$ span an integral lattice and all vectors
of length $2$ from that lattice form a root system.

The largest root system in dimension $4$ is $D_4$ having $24$ vectors splitting
into $12$ pairs of opposite roots. Since $|G_2(u)|=54>4\cdot 12$, we must have
five vertices $\{w_1,\ldots,w_5\}$ such that all vectors $(w_i)^\circ$ belong
to the same pair of opposite roots.

\begin{lemma}
There is no strongly regular graph with parameters $(76,21,2,7)$.
\end{lemma}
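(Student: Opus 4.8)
The plan is to turn the five vertices $w_1,\dots,w_5$ whose images $(w_i)^\circ$ lie in a single pair $\{r,-r\}$ of opposite roots into an over-determined combinatorial configuration. First I would record the combinatorial meaning of the inner products. By Lemma \ref{cliques} the graph $G_1(u)$ is a disjoint union of seven triangles, and since each such triangle has $s_i=1$, every $w\in G_2(u)$ has exactly one neighbour in each triangle (so $M_w$ consists of seven isolated vertices, as already reflected in the projection $-\frac12\sum_{v\in M}\hat v$). Thus each $w$ is encoded by a choice function assigning to each of the seven triangles one of its three vertices, and for non-adjacent $w,w'$ the number $\beta=|G_1(u)\cap G_1(w)\cap G_1(w')|$ is precisely the number of triangles on which the two choice functions agree.

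Next I would translate the root relation. Writing $(w_i)^\circ=\epsilon_i r$ with $\epsilon_i\in\{\pm1\}$ and $(r,r)=2$, we get $((w_i)^\circ,(w_j)^\circ)=2\epsilon_i\epsilon_j=\pm2$; in particular no two of these vertices are adjacent, since adjacency would force the value $-1$. Hence the formula $3-\beta$ applies to every pair, giving $\beta_{ij}=1$ when $\epsilon_i=\epsilon_j$ and $\beta_{ij}=5$ when $\epsilon_i\neq\epsilon_j$: same-sign vertices agree on exactly one triangle, opposite-sign vertices on exactly five.

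I would then rule out mixed signs. Suppose both signs occur; then the larger sign-class has at least three members and the smaller at least one, so I may pick $w$ in the smaller class and $w',w''$ in the larger one. Each of $w'$ and $w''$ agrees with $w$ on five of the seven triangles, so by inclusion–exclusion they agree with $w$ simultaneously on at least $5+5-7=3$ triangles; on those triangles $w'$ and $w''$ make the same choice as $w$, hence the same choice as each other, forcing $\beta_{w'w''}\ge 3$. This contradicts $\beta_{w'w''}=1$ (they share a sign), so in fact all five $\epsilon_i$ are equal.

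Finally comes the counting contradiction. With all signs equal, every pair among $w_1,\dots,w_5$ agrees on exactly one triangle, so the total number of agreeing pairs is $\binom{5}{2}=10$. Counting the same quantity triangle by triangle, on a fixed triangle the five choice functions split as $(n_1,n_2,n_3)$ with $n_1+n_2+n_3=5$ and contribute $\binom{n_1}{2}+\binom{n_2}{2}+\binom{n_3}{2}$ agreeing pairs, a sum minimized at $(2,2,1)$ with value $2$. Summing over the seven triangles yields at least $14>10$, the desired contradiction, so no $srg(76,21,2,7)$ exists. I expect the main obstacle to be the mixed-sign step: one must see that the two ``$\beta=5$'' constraints overlap enough to violate a ``$\beta=1$'' constraint; once uniform signs are forced, the final double count is immediate.
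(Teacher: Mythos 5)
Your proof is correct and follows the same skeleton as the paper's: you rule out mixed signs among the five vertices by exactly the paper's inclusion--exclusion argument (two vertices each sharing five of the seven neighbours of a third must share at least $5+5-7=3$ neighbours with each other, contradicting $\beta=1$), and then derive a contradiction from the fact that all $\binom{5}{2}$ pairs agree on exactly one triangle. The only real difference is the endgame: the paper argues via pigeonhole that $w_1,w_2,w_3$ occupy three distinct vertices on at least four of the seven triangles, forcing $w_4$ to collide twice with one of them, whereas you double-count agreeing pairs over all seven triangles, using the convexity bound $\binom{n_1}{2}+\binom{n_2}{2}+\binom{n_3}{2}\ge 2$ for $n_1+n_2+n_3=5$ to get at least $14>10=\binom{5}{2}$ agreements. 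Your count is more symmetric and arguably cleaner, since it uses all five vertices on an equal footing and avoids the case analysis implicit in the paper's pigeonhole; both are complete and correct. One small point worth making explicit in your write-up is the observation (which you do state) that each $w\in G_2(u)$ has exactly one neighbour in each of the seven triangles --- this follows from Lemma \ref{three} with $t_i=3$ forcing $s_i=1$, and it is what makes the ``choice function'' encoding and the identification of $\beta$ with the number of agreeing triangles legitimate.
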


\begin{proof}
Consider the five vertices $\{w_1,\ldots,w_5\}$ such that all vectors $(w_i)^\circ$
are in the same pair of opposite roots $\{r,-r\}$. Without loss of generality, let
the first $s\geq 3$ of the vectors $(w_i)^\circ$ be $r$ and the remaining $5-s$ be
$-r$.

From the calculations above, the vertices $w_i$ are pairwise non-adjacent.
Furthermore, if $(w_i)^\circ=(w_j)^\circ$ then $w$ and $w'$ have exactly one common
neighbour in $G_1(u)$, and if $(w_i)^\circ=-(w_j)^\circ$ then $w_i$ and $w_j$
have exactly five common neighbours in $G_1(u)$.

If $s\neq 5$ then $(w_5)^\circ=-r$ and so both $w_1$ and $w_2$ must have five
neighbours among the seven vertices from $M=G_1(u)\cap G_1(w_5)$. However, this
means that $w_1$ and $w_2$ have at least two common neighbours in $M$; a
contradiction. Therefore, $s=5$ and any two vectors $w_i$ and $w_j$ share
a unique common neighbour in $G_1(u)$.

For the final contradiction, note that there are at most three $3$-cycles
in $G_1(u)$ where $w_1$, $w_2$, and $w_3$ may have common neighbours. It follows
that there are at least four $3$-cycles $C$ where $w_1$, $w_2$ and $w_3$ are
adjacent to the three distinct vertices of $C$. This means that, in each of
these $3$-cycles $C$, the vertex $w_4$ would have the same neighbour as one
of the vertices $w_1$, $w_2$, and $w_3$. Clearly, this means that $w_4$ must
share at least two common neighbours with one of the vectors $w_1$, $w_2$, or
$w_3$; a contradiction.
\end{proof}

\bigskip
\noindent {\large{\textbf{Acknowledgement}}}

\medskip\noindent 
The authors are grateful to King Fahd University of Petroleum and Minerals for supporting this research.

\bibliographystyle{abbrv}

\begin{thebibliography}{999}

\bibitem{BCN} A.E. Brouwer and W.H.~Haemers,
    \emph{Spectra of Graphs}, Universitext, Springer, 2012.

\bibitem{SF} S. Dixmier and F. Zara,  
	\emph{Etude d'un quadrangle g\'{e}n\'{e}ralis\'{e} author de deux de ses 
	point non li\'{e}s}.  Unpublished manuscript, 1976.

\bibitem{CG} C.D Godsil, 
	\emph{Algebraic Combinatorics},  Chapman \& Hall, New York, 1993.

\bibitem{WH} W. H. Haemers,
   \emph{There exists no $(76,21,2,7)$ strongly regular graph}, Finite Geometry 
   and Combinatorics, F. De Clerck et al. (eds.), LMS Lecture Notes Series 191, 
   Cambridge University Press, 1993, 175--176.

\end{thebibliography}

\end{document}